\providecommand{\U}[1]{\protect\rule{.1in}{.1in}}
\theoremstyle{plain}
\newtheorem{theorem}{Theorem}[section]
\newtheorem{proposition}[theorem]{Proposition}
\newtheorem{corollary}[theorem]{Corollary}
\theoremstyle{definition}
\begin{document}

\title{Trivial Isochronous Centers in Odd Degrees: a Two--Branch Picture}
\author{J.A. Vera}
\date{\today}
\maketitle

\begin{abstract}
We revisit the characterization of \emph{trivial} isochronous centers for
planar polynomial Hamiltonian systems in degrees $5$ and $7$ obtained by
Braun--Llibre--Mereu, and we formalize two conclusions suggested by their
method. First, a \emph{triangular family} yields trivial (indeed global)
isochronous centers in every odd degree $n=2k-1\geq3$. Second, a genuinely
different \emph{quadratic--shear} ($Q$) family appears exactly when
$n\equiv3\pmod 4$, beginning at $n=7$, explaining the observed
\textquotedblleft alternating\textquotedblright\ emergence of a second branch.
For $n=9$ this second branch cannot occur by degree parity. Our statements
rest on the structure of the degree--7 proof and the general triangular
construction in the preprint, together with the standard isochrony
characterization $\mathcal{H}=\tfrac{1}{2}(f_{1}^{2}+f_{2}^{2})$ with $\det
Df\equiv1$.

\end{abstract}

\section{Setting and background}

Consider a planar polynomial Hamiltonian system
\[
\dot{x}=-\mathcal{H}_{y}(x,y),\qquad\dot{y}=\mathcal{H}_{x}(x,y),
\]
with $\mathcal{H}\in\mathbb{R}[x,y]$ and $\mathcal{H}(0,0)=0$. Following the
classical criterion, the origin is an isochronous center of period $2\pi$ if
and only if, in a neighborhood of $(0,0)$,
\begin{equation}
\mathcal{H}(x,y)=\frac{f_{1}(x,y)^{2}+f_{2}(x,y)^{2}}{2},\qquad\det
Df(x,y)\equiv1,\qquad f(0,0)=(0,0),\label{eq:iso-criterion}%
\end{equation}
with $f=(f_{1},f_{2})$ analytic. When $f$ can be chosen polynomial, we say the
center is \emph{trivial}. In the trivial case, globality reduces to the global
injectivity of $f$, linking the question to the real two--dimensional Jacobian
Conjecture (see the references in \cite{BraLliMer}).

Two tools used repeatedly in \cite{BraLliMer} are:

\begin{itemize}
\item \textbf{Homogeneous degeneracy (Lemma 1 of \cite{BraLliMer}).} If $p,q$
are homogeneous of degrees $m,n$ with $\det D(p,q)\equiv0$, then
$p=c_{p}r^{m^{\prime}}$ and $q=c_{q} r^{n^{\prime}}$ for a common homogeneous
$r $.

\item \textbf{Transport equation (Lemma 2 of \cite{BraLliMer}).} A PDE of the
form $p_{x}+\beta\,p_{y}=h$ with $p,h$ homogeneous reduces, under $y\mapsto
y-\beta x$, to a single integral in $x$.
\end{itemize}

In degrees $5$ and $7$, \cite{BraLliMer} \emph{completely} classifies trivial
centers: there is a single family in degree $5$ and two non--linearly
equivalent families in degree $7$ (their Theorems~4 and~5). Even degrees do
not admit trivial centers under the same polynomial hypothesis on $f$ (see the
discussion and references in \cite{BraLliMer}).

\section{Triangular branch in every odd degree}

\begin{theorem}
[Triangular branch]\label{thm:triangular} Let $k\geq2$ and set $n=2k-1$.
Define
\begin{equation}
f_{1}(x,y)=x+\sum_{i=2}^{k}c_{i}\,y^{i},\qquad f_{2}(x,y)=y+\lambda
\,f_{1}(x,y).\label{eq:triangular-f}%
\end{equation}
Then $\det Df\equiv1$ and the Hamiltonian
\[
\mathcal{H}(x,y)=\frac{f_{1}(x,y)^{2}+f_{2}(x,y)^{2}}{2}%
\]
generates a trivial isochronous center at the origin of degree $n=2k-1$.
Moreover, $f$ is injective (triangular map), hence the center is global.
\end{theorem}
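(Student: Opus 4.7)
The plan is to verify the three assertions in turn: the Jacobian identity $\det Df \equiv 1$, the resulting isochrony criterion together with the correct vector-field degree, and global injectivity of $f$. All three reduce to short direct computations once the triangular ansatz \eqref{eq:triangular-f} is in place.

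First I would compute $\det Df$ by direct differentiation. Since $f_1$ contains the $x$-variable only in its linear term, $\partial_x f_1 = 1$; and since $f_2 = y + \lambda f_1$ we get $\partial_x f_2 = \lambda$ and $\partial_y f_2 = 1 + \lambda\,\partial_y f_1$. The $2\times 2$ determinant then telescopes:
\[
\det Df = 1\cdot(1 + \lambda\,\partial_y f_1) \;-\; (\partial_y f_1)\cdot\lambda \;=\; 1,
\]
identically in the coefficients $c_i$ and in $\lambda$. With $\det Df\equiv 1$, $f(0,0)=(0,0)$ and $f$ polynomial (hence analytic), the criterion \eqref{eq:iso-criterion} certifies at once that $\mathcal H = (f_1^2+f_2^2)/2$ defines a trivial isochronous center at the origin. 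To pin down the degree of the resulting vector field I would observe that, assuming the genericity condition $c_k\neq 0$, the polynomial degree of both $f_1$ and $f_2$ is $k$, so $\mathcal H$ has degree $2k$ and the system $(\dot x,\dot y)=(-\mathcal H_y,\mathcal H_x)$ has degree $2k-1=n$, as claimed.

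For global injectivity I would factor $f=S\circ T$, where $T(x,y)=(x+P(y),y)$ with $P(y)=\sum_{i=2}^{k}c_i y^i$, and $S(u,v)=(u,v+\lambda u)$. Both factors are elementary (triangular) polynomial automorphisms of $\mathbb{R}^2$, with explicit polynomial inverses $T^{-1}(x,y)=(x-P(y),y)$ and $S^{-1}(u,v)=(u,v-\lambda u)$. Hence $f$ is a polynomial bijection of $\mathbb{R}^2$, with $f^{-1}=T^{-1}\circ S^{-1}$, and the remark from Section~1 that globality of a trivial center reduces to global injectivity of $f$ then closes the argument.

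There is no genuine obstacle here; the conceptual content of the theorem is that the triangular ansatz \eqref{eq:triangular-f} is already flexible enough to realize trivial (and even global) isochronous centers at every odd degree. The only mild bookkeeping subtlety is ensuring that the vector-field degree equals exactly $n$ rather than something smaller, which is precisely the genericity assumption $c_k\neq 0$ on the leading coefficient of $f_1$.
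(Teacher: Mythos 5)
Your proof is correct and follows essentially the same route as the paper: the same direct Jacobian computation, the same appeal to the criterion \eqref{eq:iso-criterion}, the same degree count, and injectivity via the triangular structure (you merely make explicit the factorization into elementary automorphisms, and you rightly note the implicit genericity condition $c_k\neq 0$ needed for the degree to be exactly $n$, which the paper leaves tacit).
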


\begin{proof}
This is the construction in Example~8 of \cite{BraLliMer}. Since $f_{1x}=1$, $f_{1y}=\partial_y f_1$, $f_{2x}=\lambda$ and $f_{2y}=1+\lambda \partial_y f_1$, one computes
\begin{equation*}
\det Df=\begin{vmatrix} 1 & f_{1y}\\ \lambda & 1+\lambda f_{1y}\end{vmatrix}=1.
\end{equation*}
Thus \eqref{eq:iso-criterion} holds and the origin is an isochronous center of period $2\pi$. Degree counting shows $\deg H=2k$, hence $\deg(\dot x,\dot y)=2k-1$. Injectivity follows from the triangular structure.
\end{proof}

For small odd degrees this reproduces the classified normal forms: $k=2$
(cubic; Proposition~3 in \cite{BraLliMer}) and $k=3$ (quintic; Theorem~4 in
\cite{BraLliMer}).

\section{Quadratic--shear branch when $n\equiv3\pmod 4$}

The degree--7 classification in \cite{BraLliMer} unveils a second family built
from the quadratic shear
\begin{equation}
\label{eq:Q-def}Q:=y+\Gamma x^{2},\qquad\Gamma\neq0.
\end{equation}
In degree $7$ one has
\begin{equation}
\label{eq:septic-branch}f_{1}=x+\beta_{1} Q+\beta_{2} Q^{2},\qquad
f_{2}=Q+\lambda f_{1},
\end{equation}
which yields $\det Df\equiv1$ and is not linearly equivalent to the triangular
branch (Theorem~5 of \cite{BraLliMer}).

The same Jacobian cancellation persists for higher powers of $Q$, producing a
branch exactly when $n\equiv3\pmod 4$.

\begin{proposition}
[The $Q$--branch for $n=4m-1$]\label{prop:Q-branch} Let $m\geq2$ and define
\begin{equation}
Q:=y+\Gamma x^{2},\qquad f_{1}=x+\sum_{i=1}^{m}\beta_{i}\,Q^{\,i},\qquad
f_{2}=Q+\lambda f_{1}.\label{eq:Q-branch}%
\end{equation}
Then $\det Df\equiv1$. Consequently, $\mathcal{H}=\tfrac{1}{2}(f_{1}^{2}%
+f_{2}^{2})$ is a trivial isochronous Hamiltonian with $\deg f=2m$ and system
degree
\[
n=2\deg f-1=4m-1\equiv3\pmod 4.
\]

\end{proposition}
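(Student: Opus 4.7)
The plan is to verify $\det Df \equiv 1$ by a direct chain-rule computation that exploits the layered structure $x \mapsto Q \mapsto f_1 \mapsto f_2$ of the map. The first simplification is to remove $\lambda$ from the picture: since $f_2 = Q + \lambda f_1$, the second row of $Df$ equals the Jacobian row of $Q$ plus $\lambda$ times the first row. This is a unit-determinant row operation, so
\[
\det Df = \det D(f_1, Q),
\]
and $\lambda$ is cosmetic for the Jacobian identity (though it remains essential for distinguishing this family from the triangular branch of Theorem \ref{thm:triangular}).

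Next, writing $P(t) := \sum_{i=1}^{m} \beta_i t^{i}$ so that $f_1 = x + P(Q)$, the chain rule gives $f_{1,x} = 1 + P'(Q)\, Q_x$ and $f_{1,y} = P'(Q)\, Q_y$. Using $Q_x = 2\Gamma x$ and, crucially, $Q_y = 1$,
\[
\det D(f_1, Q) = f_{1,x}\, Q_y - f_{1,y}\, Q_x = \bigl(1 + 2\Gamma x\, P'(Q)\bigr) - P'(Q) \cdot 2\Gamma x = 1.
\]
This is the Jacobian cancellation advertised in the surrounding discussion: the two $P'(Q)$ contributions annihilate each other precisely because $Q$ is affine in $y$, so that $Q_y \equiv 1$ regardless of the degree of $P$. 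The criterion \eqref{eq:iso-criterion} then immediately yields that $\mathcal{H} = \tfrac12 (f_1^2 + f_2^2)$ is a trivial isochronous Hamiltonian.

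For the degree count, assume $\beta_m \neq 0$ (the genuinely new case). Then $Q^m$ has degree $2m$ with leading monomial $\Gamma^{m} x^{2m}$, whence $\deg f_1 = 2m$, and for $m \geq 2$ also $\deg f_2 = \max(2, 2m) = 2m$. Consequently $\deg \mathcal{H} = 4m$ and the Hamiltonian system has degree $n = 4m - 1 \equiv 3 \pmod{4}$, as asserted. I expect no serious obstacle here: the Jacobian identity is a one-line chain-rule computation once the row-operation reduction is observed, and the degree count is purely combinatorial. The only point worth flagging as \emph{not} covered by this argument is the non-equivalence with the triangular branch (required for this to constitute a second family rather than a repackaging); that comparison is a separate task and is not part of the present statement.
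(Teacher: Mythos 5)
Your proof is correct and is essentially the paper's argument: the paper computes the same four partial derivatives with $S=P'(Q)$ and cancels the $2\times2$ determinant term by term, which is exactly the cancellation your row operation $\det Df=\det D(f_1,Q)$ followed by $Q_y\equiv1$ packages more tidily. The degree count (with your explicit $\beta_m\neq0$ proviso, which the paper leaves implicit in ``the degree claim follows'') matches the paper's proof.
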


\begin{proof}
Let $S:=\sum_{i=1}^m i\beta_i Q^{i-1}$. Since $Q_x=2\Gamma x$ and $Q_y=1$, we have
\begin{equation*}
f_{1x}=1+Q_x S,\qquad f_{1y}=S,\qquad
f_{2x}=Q_x+\lambda f_{1x},\qquad f_{2y}=1+\lambda f_{1y}.
\end{equation*}
Therefore
\begin{align*}
\det Df
&= (1+Q_xS)\,(1+\lambda S)-S\,(Q_x+\lambda(1+Q_xS))\\
&= 1 + \lambda S + Q_x S + \lambda Q_x S^2 - S Q_x - \lambda S - \lambda Q_x S^2\\
&= 1.
\end{align*}
This is exactly the cancellation mechanism in the septic case \eqref{eq:septic-branch}. The degree claim follows.
\end{proof}

\begin{corollary}
[Alternation of branches]\label{cor:alternation} The $Q$--branch exists
precisely for $n\equiv3\pmod
4$ (i.e., $n=7,11,15,\dots$). For $n\equiv1\pmod 4$ (e.g., $n=5,9,13,\dots$)
it cannot occur, since $n=4m-1$ has no integer solution $m$. In particular: at
degree $9$ only the triangular branch exists; at degree $11$ both triangular
and $Q$ branches exist (with $m=3$). In degree $7$ the two branches are not
linearly equivalent \cite[Thm.~5]{BraLliMer}.
\end{corollary}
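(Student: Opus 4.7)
The plan is to combine Proposition \ref{prop:Q-branch}, Theorem \ref{thm:triangular}, and an elementary parity observation, together with the classification result cited from \cite[Thm.~5]{BraLliMer}. Essentially all of the real work is already carried out in Proposition \ref{prop:Q-branch}, so what remains is bookkeeping.

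First, for the existence half, I would instantiate Proposition \ref{prop:Q-branch} at $m=2,3,4,\dots$, obtaining trivial isochronous Hamiltonians whose system degrees are $4m-1\in\{7,11,15,\dots\}$. This realizes the $Q$--branch in every degree congruent to $3$ modulo $4$ that is at least $7$.

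Second, for the ``cannot occur'' half at $n\equiv 1\pmod 4$, the argument is purely arithmetic. The construction \eqref{eq:Q-branch} forces $\deg f=2m$ (since $Q$ is quadratic and the highest power of $Q$ appearing in $f_1$ is $Q^m$), hence the resulting system degree is $n=2\deg f-1=4m-1\equiv 3\pmod 4$. If instead $n\equiv 1\pmod 4$, then $(n+1)/4\notin\mathbb{Z}$, so there is no admissible choice of $m$ in \eqref{eq:Q-branch}. In particular, at $n=9$ the construction is unavailable, leaving only the triangular branch of Theorem \ref{thm:triangular} (taken with $k=5$); at $n=11$ both branches are present, via $k=6$ in Theorem \ref{thm:triangular} and $m=3$ in Proposition \ref{prop:Q-branch}.

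The linear non--equivalence of the two branches in degree $7$ is not reproved here: I would just cite \cite[Thm.~5]{BraLliMer}. The only delicate point I anticipate is one of \emph{interpretation}: ``the $Q$--branch exists'' is to be read as ``the construction of Proposition \ref{prop:Q-branch} yields a trivial isochronous center of that degree'', rather than as a completeness statement about all trivial centers for $n\geq 11$, which is not addressed in the present note.
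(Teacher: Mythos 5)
Your proposal is correct and follows essentially the same route as the paper, which treats the corollary as an immediate consequence of Proposition~\ref{prop:Q-branch} (existence for $n=4m-1$), the arithmetic impossibility of $4m-1\equiv1\pmod 4$, and the citation of \cite[Thm.~5]{BraLliMer} for the degree-$7$ inequivalence. Your closing caveat about reading ``the $Q$--branch exists'' as a statement about the construction rather than a completeness claim is apt and consistent with the paper's intent.
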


\section{Outline of the septic proof and its extension}

For degree $7$, \cite{BraLliMer} expands
\[
f_{1}=x+p_{2}+p_{3}+p_{4},\qquad f_{2}=y+q_{2}+q_{3}+q_{4},
\]
imposes $\det Df\equiv1$, and splits into a homogeneous chain of equations
(their Eqs.~(10)--(15)). By Lemma~1 (homogeneous degeneracy), either
$q_{i}=\lambda p_{i}$ (closing the triangular branch via Lemma~2), or a
special linear factor appears which, after a judicious linear change, produces
the quadratic shear $Q$ and the second branch \eqref{eq:septic-branch}. The
two branches are then shown to be inequivalent under linear transformations.
\medskip

For higher degrees the same chain persists with two more homogeneous layers
per extra degree of $f$, and the same dichotomy remains: the triangular branch
always closes; the $Q$--branch appears exactly when $\deg f$ is even (so that
$n=2\deg f-1\equiv3\pmod 4$), in agreement with
Corollary~\ref{cor:alternation}.

\section{Even degrees and nontrivial examples}

Even degrees admit no trivial centers under the hypotheses above (see the
discussion and citations in \cite{BraLliMer}). On the other hand,
\cite{BraLliMer} constructs \emph{nontrivial} isochronous centers in degrees
$6k+1 $, further underscoring that the trivial vs.\ nontrivial dichotomy is
essential in odd degrees.

\section*{Acknowledgements}

We thank Braun, Llibre and Mereu for their work, which inspired the present
note and from which we borrow both statements and methods.

\end{document}